\theoremstyle{plain}
\newtheorem{theorem}{Theorem}
\newtheorem{lemma}{Lemma}
\newtheorem{proposition}{Proposition}
\theoremstyle{definition}
\newtheorem{definition}{Definition}
\theoremstyle{remark}
\newtheorem{remark}{Remark}
\begin{document}
\title[Yu.~M.~Smirnov's General Equivariant Shape Theory]
{Yu.~M.~Smirnov's General Equivariant\\ Shape Theory}

\author{P. S. Gevorgyan}

\address{Moscow Pedagogical State University, Russia}

\email{pgev@yandex.ru}

\begin{abstract}
A  general equivariant shape theory  for arbitrary $G$-spaces
in the case of a compact group $G$  is constructed by using the method of pseudometrics suggested by Yu.~M.~Smirnov as early as in 1985 at the fifth Tiraspol symposium on general topology and its applications~\cite{smirn85}.

\end{abstract}

\keywords{Equivariant shape, $G$-space,  pseudometric, inverse system}
\subjclass{55P91; 55P55}

\maketitle

\section{Introduction}

Equivariant shape theory was first constructed by Yu.~M.~Smirnov
for $p$-paracompact spaces in the case of a compact acting group $G$ \cite{smirn85-1}.
In 1985, at the fifth Tiraspol symposium on general topology and its applications, 
Smirnov suggested a new method for constructing equivariant shape theory for arbitrary
$G$-spaces in the case of a compact group $G$, which developed 
his ideas proposed at a topological conference in Keszthely and 
at a conference on geometric topology in Warsaw. This method is based on 
the application of all invariant
continuous pseudometrics on a given $G$-space. As Smirnov mentioned
 in \cite{smirn85}, the key role in the method is played by the 
Arens--Ellis--Gevorgyan theorem on a closed 
isometric equivariant embedding of completely regular Hausdorff (metrizable) $G$-spaces 
in locally convex (normed) linear $G$-spaces~\cite{gev85}.

Unfortunately, a detailed exposition of general equivariant shape theory based on pseudometrics
has never been published,
although this theory was constructed by a completely different method in~\cite{am87}.

In this paper, we fill this lacuna and construct  Smirnov's  general equivariant shape theory 
for arbitrary $G$-spaces in the case of a compact group~$G$.

\section{Basic Definitions and Auxiliary Results}

Throughout the paper, we assume $G$ to be a \textit{compact} topological group.

Given a topological space $X$, a continuous map $\pi :G \times X \to X$ 
(we use the notation $\pi(g, x) = gx$) 
is called an \textit{action of the group} $G$ on
$X$ if, for any $g, h \in G$ and $x\in X$, 
$$g(hx)=(gh)x\quad\text{and}\quad ex=x ,$$
where $e \in G$ is the identity element of the group $G$.
A triple $(X,G,\pi)$, or a topological space $X$ with an action of a group $G$,  
is called a \textit{$G$-space}.

A metric $\rho$ on a metrizable $G$-space $(X,\rho)$ is said to be \textit{invariant} 
if $\rho(gx, gy)=\rho(x,y)$. In this case, $X$ is called an \textit{invariantly metrizable} 
$G$-space.

Any metrizable $G$-space is invariantly metrizable~\cite{p60}.

Let $X$ be a linear topological space. An action $\pi :G \times X \to X$  is said to 
be \textit{linear} if 
$$g(\lambda x+\mu y)=\lambda gx+\mu gy$$
for all $g\in G$, $x, y\in X$, and $\lambda, \mu\in \mathbb{R}$.

A subset $A$ in a $G$-space $X$ is said to be \textit{invariant} if
$ga\in A$ for any $g\in G$ and $a\in A$. Obviously, any invariant subset
$A$ itself is a $G$-space.

Let $X$ and $Y$ be $G$-spaces. A continuous map
$f: X\to Y$ is called an \textit{equivariant map}, or a \textit{$G$-map},
if $f(gx)=gf(x)$ for any $g\in G$ and $x\in X$. An equivariant homeomorphism
$f: X\to Y$ is called an \textit{equimorphism} of the $G$-spaces $X$ and $Y$.

We say that equivariant maps $f_0, f_1 :X\to Y$ are \textit{$G$-homotopic}, 
or \textit{equivariantly homotopic},
and write $f_0\simeq_G f_1$ if there exists a $G$-map $F:X\times I \to Y$ such that
$F(x,0)=f_0(x)$ and $F(x,1)=f_1(x)$ for all $x\in X$ (it is assumed that 
the group $G$ acts trivially  on the interval $I=[0,1]$,
and its action on $X\times I$ is defined coordinatewise by $g(x,t)=(gx,t)$).
In this case, $F$ is called a \textit{$G$-homotopy} between $f_0$ and $f_1$.

We denote the homotopy class of an equivariant map $f:X\to Y$ by $[f]$.

Let $Z$ be a $G$-space, and let $Y\subseteq Z$ be its invariant subset.
A $G$-map $r:Z \to Y$ is called an \textit{equivariant retraction} if $r|_Y=id_Y$.

A $G$-space $Y$ is an \textit{equivariant absolute neighborhood retract}, or a 
$G$-ANR-\textit{space} (an \textit{equivariant absolute retract}, or a $G$-AR-\textit{space}), 
if $Y$ is metrizable and, for any closed
equivariant embedding $Y\subset Z$ in a metrizable $G$-space $Z$, there exists an 
invariant neighborhood $U$ of the invariant subset $Y$ and an equivariant retraction $r:U \to Y$
(there exists an equivariant retraction $r:Z \to Y$).

A $G$-space $Y$ is an \textit{equivariant absolute neighborhood extensor}, or 
a $G$-ANE-\textit{space} (an \textit{equivariant absolute extensor}, or 
a $G$-AE-\textit{space}), if, given any metrizable
$G$-space $X$ and its closed invariant subset $A\subset X$, each equivariant map
$f:A\to Y$ can be extended to an equivariant map $F:U\to Y$ ($F:X\to Y$), where $U$ 
is some invariant neighborhood of the subset $A$.

A metrizable $G$-space $Y$ is a $G$-ANR ($G$-AR) space if and only if it is a 
$G$-ANE ($G$-AE) space \cite{am87}.

We denote the homotopy category of all $G$-spaces by H-$G$-TOP.
In  this category, objects are $G$-spaces and morphisms are  
homotopy classes of equivariant maps.
Let H-$G$-ANR be the complete subcategory of $G$-ANR-spaces in the category H-$G$-TOP.

\begin{theorem}[\cite{am87}]\label{th2}
Suppose that $X$ is a metrizable $G$-space, $A\subset X$ is a closed invariant subset,
$Y$ is a $G$-ANR-space, and $f,g:X\to Y$ are equivariant maps such that 
 their restrictions $f|_A$ and $g|_A$ 
are connected by an equivariant homotopy $H:A\times I\to Y$.
Then there exists an equivariant neighborhood $V\supset A$ and an equivariant homotopy 
$\tilde{H}:V\times I\to Y$
between the restrictions $f|_V$ and $g|_V$ for which $\tilde{H}|_{A\times I}=H$.
\end{theorem}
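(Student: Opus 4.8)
The plan is to deduce this equivariant homotopy extension property directly from the characterization quoted above, namely that a metrizable $G$-ANR space is a $G$-ANE space. The key move is to repackage the three pieces of data $f$, $g$, and $H$ as a \emph{single} equivariant map defined on a closed invariant subset of $X\times I$, and then to invoke the extension property for that one map.

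First I would regard $X\times I$ as a metrizable $G$-space, with $G$ acting trivially on the factor $I$ and coordinatewise on the product, and single out the closed invariant subset
$$ W=(X\times\{0\})\cup(A\times I)\cup(X\times\{1\}). $$
Here $W$ is closed as a finite union of closed sets and invariant because each of the three pieces is invariant under the coordinatewise action. On $W$ I define $\Phi\colon W\to Y$ by the rules $\Phi(x,0)=f(x)$, $\Phi(x,1)=g(x)$, and $\Phi(a,t)=H(a,t)$ for $a\in A$. These prescriptions agree on the overlaps $A\times\{0\}$ and $A\times\{1\}$ precisely because $H$ is a homotopy from $f|_A$ to $g|_A$, so $\Phi$ is well defined and continuous by the pasting lemma, and equivariance of $\Phi$ is immediate from that of $f$, $g$, and $H$.

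Since $Y$ is a $G$-ANR, hence a $G$-ANE, the equivariant map $\Phi$ extends to an equivariant map $\tilde\Phi\colon\mathcal U\to Y$ defined on some invariant neighborhood $\mathcal U$ of $W$ in $X\times I$. It then remains to carve out of $\mathcal U$ an invariant tube of the form $V\times I$. To that end I would set
$$ V=\{\,x\in X:\{x\}\times I\subseteq\mathcal U\,\}, $$
and define $\tilde H$ as the restriction of $\tilde\Phi$ to $V\times I$. By construction $\tilde H$ restricts to $f$ on $V\times\{0\}$, to $g$ on $V\times\{1\}$, and to $H$ on $A\times I$, which are exactly the three required properties.

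The step I expect to be the main obstacle is verifying that $V$ is a genuine \emph{invariant} neighborhood of $A$, since this is the specifically equivariant subtlety: the extension a priori lives only on some neighborhood $\mathcal U$ of $W$, not on a tube. Openness of $V$ is the tube lemma, which uses compactness of $I$: for each $x\in V$ there is an open $W_x\ni x$ with $W_x\times I\subseteq\mathcal U$, whence $W_x\subseteq V$. The inclusion $A\subseteq V$ is immediate from $A\times I\subseteq W\subseteq\mathcal U$. Finally, invariance of $V$ follows from invariance of $\mathcal U$ together with the triviality of the action on $I$: for $x\in V$ and $g\in G$ one has $\{gx\}\times I=g(\{x\}\times I)\subseteq g\mathcal U\subseteq\mathcal U$, so $gx\in V$. (Compactness of $G$ is what underlies the $G$-ANR $=$ $G$-ANE theory that supplies the invariant domain $\mathcal U$ in the first place; alternatively, any neighborhood can be saturated to an invariant one because projection along the compact factor $G$ is a closed map.) Restricting $\tilde\Phi$ to $V\times I$ then yields the desired $G$-homotopy $\tilde H$.
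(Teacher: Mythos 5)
Your proposal is correct and follows essentially the same route as the paper: both form the closed invariant subset $(X\times\{0\})\cup(A\times I)\cup(X\times\{1\})$ of $X\times I$, glue $f$, $g$, $H$ into one equivariant map, extend it over an invariant neighborhood using the $G$-ANE property of $Y$, and then use compactness of $I$ (the tube lemma) to extract an invariant tube $V\times I$. Your write-up merely supplies more detail than the paper's at the tube-lemma step, in particular the explicit verification that $V$ is open and invariant.
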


\begin{proof}
Consider the closed invariant subset $C=X\times \{0\}\cup A\times I \cup X\times \{1\}$
of the $G$-space $X\times I$ and the equivariant map $h:C \to Y$ defined by 
$$h(x,0)=f(x), \quad h(x,1)=g(x), \quad h(a,t)=H(a,t),$$
where $x\in X$, $a\in A$, and $t\in I$.

Let $\tilde{h}:U\to Y$ be an equivariant extension of the map $h$
to some invariant neighborhood $U$ of the closed invariant set $C$. Since $I$ is compact,
it follows that  $A$ has an invariant neighborhood $V$ in $X$ for which 
$V\times I\subset U$. It remains to note that $\tilde{H}=\tilde{h}|_{V\times I}$ 
is the required equivariant homotopy
between $f|_V$ and $g|_V$.
\end{proof}

\begin{definition}[{Morita \cite{morita}}]\label{defass}
An inverse system $\{X_\lambda , [p_{\lambda \lambda '}], \Lambda\}$ in the category
H-$G$-ANR is said to be \textit{associated}
with a $G$-space $X$ if 

$1^\circ$ \ for each $\lambda \in \Lambda$,  there exists an equivariant map
$p_\lambda : X \to X_\lambda $ such that
$[p_{\lambda \lambda'}] \circ [p_{\lambda'}]=[p_\lambda]$ 
for any $\lambda, \lambda' \in \Lambda$, $\lambda<\lambda'$;

$2^\circ$ \ for any $G$-ANR-space $Q$ and equivariant map $f:X\to Q$, 
there exists a $\lambda \in \Lambda$ and an equivariant map 
$f_\lambda:X_\lambda \to Q$ such that $[f]=[f_\lambda] \circ [p_\lambda]$;

$3^\circ$ \ for any $\lambda \in \Lambda$ and any equivariant maps
$f_\lambda, g_\lambda :X_\lambda \to Q$ satisfying the condition 
$[f_\lambda] \circ [p_\lambda] = [g_\lambda] \circ [p_\lambda]$,
there exists a $\lambda ' \in \Lambda$, $\lambda \leqslant \lambda '$, for which 
$[f_\lambda] \circ [p_{\lambda \lambda '}] = [g_\lambda] \circ [p_{\lambda \lambda '}]$.
\end{definition}

A detailed exposition of shape theory and equivariant topology is contained in 
books \cite{b72}, \cite{ms82}, 
and \cite{br72}.

\section{The Equivariant Arens--Eells Theorem}

In \cite{A-E}, Arens and Ellis proved a theorem on a closed
uniform (isometric) embedding of  a uniform (metrizable) space
in a locally convex (normed) topological space.
An equivariant generalization of this theorem (see \cite{gev85} and \cite{gev3})
plays the key role in the construction of  Smirnov's general shape theory based on 
pseudometrics.

Below we briefly describe the Arens--Ellis construction of an embedding 
of a metrizable space in a normed linear space.

Let $(X, \rho)$ be a metric space, and let  $M\left(X\right)$ denote
the set of real-valued functions $m(x)$ satisfying the following two conditions:

\begin{enumerate}
\item $m(x)$ vanishes at all points of $X$ except, possibly, at 
finitely many points $x_1,... , x_n$;

\item $\sum\limits_{i=1}^n {m(x_i )} =0$. 
\end{enumerate}

It is easy to see that $M\left( X \right)$ is a real linear space
under pointwise addition of functions and multiplication of a function by a scalar.

By $\lambda x$, where $\lambda \in R$ and $x\in X$, we denote
the function taking the value $\lambda$ at the point
$x$ and vanishing at the other points. Clearly, any function $m\in M\left(
X \right)$ can be represented as 
\[
m=\sum\limits_{i=1}^n {\lambda _i x_i } ,
\]
where $x_i \in X$ and $\sum\limits_{i=1}^n {\lambda _i } =0$.

Given a fixed point $x^0\in X$ in $X$, we set 
\[
B_{x^0} =\left\{ {x-x^0\in M\left( X \right)\;;\quad x\in X, \ x\ne x^0}\right\},
\]
where $x-x^0$ is the function taking the value $1$ at the point $x$, 
$-1$ at the point $x^0$, and vanishing at all other
points.

\begin{lemma}[\cite{A-E}]
The set $B_{x^0}$ is a Hamel basis for the linear space $M\left(X\right)$.
\end{lemma}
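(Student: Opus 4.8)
The plan is to verify the two defining properties of a Hamel basis separately: that $B_{x^0}$ is linearly independent, and that it spans $M(X)$. Since the excerpt has already established that every $m \in M\left(X\right)$ can be written as $m = \sum_{i=1}^n \lambda_i x_i$ with $\sum_{i=1}^n \lambda_i = 0$, the work reduces to relating this representation to the proposed basis elements $x - x^0$.

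For linear independence, I would take a finite linear combination $\sum_{i=1}^k c_i (x_i - x^0) = 0$, where $x_1, \dots, x_k$ are \emph{distinct} points of $X$ all different from $x^0$, and evaluate the resulting function of $M\left(X\right)$ at a fixed point $x_j$. Since $x_i - x^0$ takes the value $1$ at $x_i$ and $0$ at every point other than $x_i$ and $x^0$, and since $x_j \neq x^0$ and $x_j \neq x_i$ for $i \neq j$, only the $j$-th summand contributes; this forces $c_j = 0$. Running $j$ over all indices gives linear independence.

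For spanning, I would take an arbitrary $m = \sum_{i=1}^n \lambda_i x_i \in M\left(X\right)$ and, after collecting terms, assume the $x_i$ are distinct and write $m = \mu\, x^0 + \sum_{j} \lambda_j x_j$ with the $x_j$ distinct from $x^0$, where the zero-sum condition reads $\mu + \sum_j \lambda_j = 0$. The key computation is then
\[
\sum_j \lambda_j (x_j - x^0) = \sum_j \lambda_j x_j - \Bigl(\sum_j \lambda_j\Bigr) x^0 = \sum_j \lambda_j x_j + \mu\, x^0 = m,
\]
so $m$ lies in the span of $B_{x^0}$. The one point requiring attention is precisely the handling of the coefficient at $x^0$: the constraint $\sum \lambda_i = 0$ is exactly what guarantees that the $x^0$-contribution coming from the basis vectors reproduces the $x^0$-coefficient $\mu$ of $m$, rather than leaving a spurious multiple of the (non-basis) function $x^0$. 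No genuine obstacle arises beyond this bookkeeping, and combining the two parts establishes that $B_{x^0}$ is a Hamel basis for $M\left(X\right)$.
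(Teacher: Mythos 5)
Your proof is correct: evaluating a vanishing linear combination at each $x_j$ gives independence, and the zero-sum condition $\sum_i\lambda_i=0$ is exactly what makes the $x^0$-coefficient come out right in the spanning computation. The paper itself offers no proof of this lemma (it is simply quoted from Arens--Eells), and your argument is the standard one, so there is nothing further to compare.
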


The linear space $M\left(X\right)$ is endowed with the norm $s$ defined by 
\begin{equation}\label{eq2}
s\left(m\right)=\inf \left\{{\sum\limits_j{\left| {\mu _j }
\right|\rho \left( {y_j, z_j } \right)} } \right\},
\end{equation}
where the greatest lower bound is over all possible 
decompositions $m=\sum\limits_j{\mu _j \left( {y_j -z_j } \right)}$
of the element $m\in M(X)$.

Consider the map $i:X\to M(X)$ defined by 
\begin{equation}\label{eq5}
i(x)=x-x_0.
\end{equation}

\begin{theorem}[\cite{A-E}]
The map $i$ is a closed isometric embedding of the metric space $X$ in the normed
space $M\left(X\right)$.
\end{theorem}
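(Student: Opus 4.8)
The plan is to establish the three assertions—that $i$ is isometric, that it is a topological embedding, and that its image is closed—in that order, with the entire argument resting on one duality device: the pairing of elements of $M(X)$ against $1$-Lipschitz functions on $X$. First I would record that each $m=\sum_k c_k x_k\in M(X)$ (so $\sum_k c_k=0$) pairs with any $1$-Lipschitz $f:X\to\mathbb{R}$ through the well-defined linear functional $\langle f,m\rangle=\sum_{x\in X}m(x)f(x)$. The key estimate is that for every decomposition $m=\sum_j\mu_j(y_j-z_j)$ one has $|\langle f,m\rangle|=|\sum_j\mu_j(f(y_j)-f(z_j))|\le\sum_j|\mu_j|\rho(y_j,z_j)$, whence, taking the infimum over all decompositions, $|\langle f,m\rangle|\le s(m)$. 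This inequality is the single tool driving everything below.

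For the isometry I observe that $i(x)-i(y)=x-y$. The trivial decomposition $1\cdot(x-y)$ gives $s(x-y)\le\rho(x,y)$, and for the reverse inequality I apply the estimate above to the $1$-Lipschitz function $f=\rho(\cdot,y)$: since $\langle f,x-y\rangle=\rho(x,y)-\rho(y,y)=\rho(x,y)$, we obtain $\rho(x,y)\le s(x-y)$. Hence $s(i(x)-i(y))=\rho(x,y)$, so $i$ is distance-preserving; in particular it is injective and, being an isometry between metric spaces, a homeomorphism onto its image.

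Closedness is the step I expect to be the main obstacle, precisely because $X$ need not be complete, so a naive Cauchy-sequence argument cannot suffice by itself. Since $M(X)$ is normed, hence metric, I may argue with sequences: I would take $i(x_n)\to m\in M(X)$, note that the isometry makes $(x_n)$ Cauchy, set $S=\mathrm{supp}(m)\cup\{x_0\}$, and consider $r=\lim_n\mathrm{dist}(x_n,S)$, which exists because $\mathrm{dist}(\cdot,S)$ is $1$-Lipschitz. If $r=0$, the finiteness of $S$ together with the Cauchy property forces $x_n\to w$ for some $w\in S\subseteq X$, whence $m=i(w)\in i(X)$. The delicate case is $r>0$, which I would rule out by manufacturing a separating function: for large $N$ put $f(t)=\max\{0,\,r/4-\mathrm{dist}(t,\{x_n:n\ge N\})\}$, which is $1$-Lipschitz, vanishes on $S$ (since each point of $S$ is eventually at distance $\ge r/2$ from the tail), yet satisfies $f(x_n)=r/4$ for $n\ge N$. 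Then $\langle f,m\rangle=0$ and $f(x_0)=0$, so $\langle f,i(x_n)\rangle=f(x_n)=r/4$ for all large $n$, contradicting $\langle f,i(x_n)\rangle\to\langle f,m\rangle=0$. Thus $r=0$ is forced, $(x_n)$ converges in $X$, and $m\in i(X)$; therefore $i(X)$ is closed. The crux throughout is the interplay between the finite support of the putative limit $m$ and the $1$-Lipschitz test functions, which lets me detect and exclude sequences escaping to points outside $X$.
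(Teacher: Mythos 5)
The paper does not actually prove this theorem: it is quoted verbatim from Arens--Eells \cite{A-E}, so there is no in-text argument to compare against. Judged on its own, your proof is correct and is essentially the standard one for the Arens--Eells (Lipschitz-free) construction. The pairing $\langle f,m\rangle=\sum_x m(x)f(x)$ against $1$-Lipschitz $f$, with the bound $|\langle f,m\rangle|\le s(m)$ obtained by passing the estimate through an arbitrary decomposition and taking the infimum, is exactly the duality that makes $s$ computable from below; the choice $f=\rho(\cdot,y)$ for the lower bound $\rho(x,y)\le s(x-y)$ is the classical trick, and the upper bound from the trivial decomposition is immediate, so the isometry (hence embedding) part is complete. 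The closedness argument is also sound: the isometry makes $(x_n)$ Cauchy, the limit $r=\lim_n\operatorname{dist}(x_n,S)$ exists because $\operatorname{dist}(\cdot,S)$ is $1$-Lipschitz, the case $r=0$ uses finiteness of $S=\operatorname{supp}(m)\cup\{x_0\}$ plus Cauchyness to force convergence in $X$, and in the case $r>0$ your bump function $f(t)=\max\{0,\,r/4-\operatorname{dist}(t,\{x_n:n\ge N\})\}$ is $1$-Lipschitz, vanishes on $S$ (every tail point lies at distance $>r/2$ from $S$, hence every point of $S$ lies at distance $\ge r/2$ from the tail), and evaluates to $r/4$ on the tail, contradicting $\langle f,i(x_n)\rangle\to\langle f,m\rangle=0$. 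This is the same mechanism Arens and Eells use (a Lipschitz functional separating a putative limit from the image), so your write-up can stand as a self-contained substitute for the citation. Two cosmetic points: state explicitly that sequential closedness suffices because $M(X)$ is a normed, hence metric, space (you implicitly use this), and note that your isometry computation also shows $s$ is genuinely a norm on the span of $i(X)$, though positivity of $s$ on all of $M(X)$ is part of the cited setup and need not be re-proved here.
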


Now, let $\left(X, G, \pi\right)$ be a metrizable $G$-space.

We define a map $\tilde {\pi }:G\times M\left( {X} \right)\to
M\left( {X} \right)$ by 
\begin{equation}
\label{eq3}
\tilde {\pi }\left( {g,m} \right)=\sum\limits_{i=1}^n {\lambda _i \left(
{gx_i -x^0} \right),}
\end{equation}
where $g\in G$, $x_0\in X$ is a fixed point, $gx_i=\pi(g,x_i)$, 
and $m=\sum\limits_{i=1}^n {\lambda _i \left( {x_i -x^0}
\right)}$ is an arbitrary point in the space $M\left( {X} \right)$ 
decomposed in the basis $B_{x^0} $.

\begin{lemma}[\cite{gev3}]\label{lem2}
The map $\tilde {\pi }$ defined by (\ref{eq3}) 
is a linear action of the group $G$ on the normed space $M\left(X\right)$.
\end{lemma}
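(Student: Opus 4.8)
The plan is to treat, for each fixed $g \in G$, the assignment $m \mapsto \tilde{\pi}(g,m)$ as a single linear operator and then check the two action axioms and joint continuity. Since the preceding lemma tells us that $B_{x^0}$ is a Hamel basis, I would define $T_g \colon M(X) \to M(X)$ to be the unique linear extension of the rule $x - x^0 \mapsto gx - x^0$ on basis vectors. Comparing with (\ref{eq3}) shows $\tilde{\pi}(g,m) = T_g m$, so the formula is well defined (the coefficients $\lambda_i$ are those of the unique basis expansion of $m$), and $\tilde{\pi}(g,m)$ is a finite combination of differences $gx_i - x^0 \in M(X)$, hence lies in $M(X)$. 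Linearity of the action in the space variable is then immediate from the construction of $T_g$, which is precisely the property $g(\lambda m + \mu m') = \lambda gm + \mu gm'$. A reformulation worth recording at once is that $T_g$ sends an arbitrary difference to $T_g(y - z) = gy - gz$ for all $y,z \in X$; this follows by linearity, using $gx^0 = x^0$ to cover the case $y = x^0$ or $z = x^0$, and it makes the later estimates transparent.

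Next I would verify the algebraic axioms on basis vectors, where by linearity it suffices to test the two identities. The unit axiom is clear: $T_e(x - x^0) = ex - x^0 = x - x^0$, so $T_e = \mathrm{id}$ and $\tilde{\pi}(e,m) = m$. For the composition law $T_g \circ T_h = T_{gh}$ I would compute $T_g\big(T_h(x - x^0)\big) = T_g(hx - x^0)$: when $hx \ne x^0$ this equals $g(hx) - x^0 = (gh)x - x^0 = T_{gh}(x - x^0)$, as required. When $hx = x^0$ the left side is $T_g(0) = 0$, while the right side is $(gh)x - x^0 = gx^0 - x^0$, and here I would invoke that $x^0$ is a $G$-fixed point, so $gx^0 = x^0$ and both sides vanish. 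This is the one place where the fixed-point hypothesis on $x^0$ is genuinely needed, and it is the step most likely to be overlooked.

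The substantive part is continuity of $\tilde{\pi} \colon G \times M(X) \to M(X)$, and the key tool is that each $T_g$ is a linear isometry of $\big(M(X), s\big)$. Since $X$ is a metrizable $G$-space with $G$ compact, it admits a $G$-invariant metric $\rho$ by the cited invariant metrizability result, so I may assume $\rho(gy,gz) = \rho(y,z)$. Given any representation $m = \sum_j \mu_j (y_j - z_j)$, the reformulation above yields the representation $T_g m = \sum_j \mu_j (gy_j - gz_j)$ of the same cost $\sum_j |\mu_j|\,\rho(gy_j,gz_j) = \sum_j |\mu_j|\,\rho(y_j,z_j)$; taking the infimum over representations of $m$ gives $s(T_g m) \le s(m)$. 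Applying this bound to $g^{-1}$ and to $T_g m$, and using $T_{g^{-1}} T_g = T_e = \mathrm{id}$ from the previous step, gives the reverse inequality, so $s(T_g m) = s(m)$.

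Finally, I would assemble joint continuity at a point $(g_0, m_0)$ from the triangle inequality
\[
s(T_g m - T_{g_0} m_0) \;\le\; s\big(T_g(m - m_0)\big) + s\big((T_g - T_{g_0}) m_0\big)
\;=\; s(m - m_0) + s\big((T_g - T_{g_0}) m_0\big),
\]
using the isometry property for the first summand. Writing $m_0 = \sum_i \lambda_i (x_i - x^0)$, the second summand equals $s\big(\sum_i \lambda_i (gx_i - g_0 x_i)\big) \le \sum_i |\lambda_i|\,\rho(gx_i, g_0 x_i)$, and each $\rho(gx_i, g_0 x_i) \to 0$ as $g \to g_0$ by continuity of the original action $\pi$, the sum being finite for the fixed $m_0$. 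Hence the right-hand side tends to $0$ as $(g,m) \to (g_0, m_0)$, which gives continuity and completes the verification that $\tilde{\pi}$ is a linear continuous action. The main obstacle throughout is this continuity claim: it hinges on replacing $\rho$ by a $G$-invariant metric so that the operators $T_g$ become isometries, after which the dependence on $g$ is controlled termwise by the continuity of $\pi$.
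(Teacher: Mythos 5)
The paper does not prove this lemma (it is quoted from \cite{gev3}), so your argument has to stand on its own. Its architecture is the right one: define $T_g$ as the linear extension from the Hamel basis $B_{x^0}$, check the two action identities on basis vectors, show each $T_g$ is an isometry of $(M(X),s)$ after replacing $\rho$ by an invariant metric, and get joint continuity from $s(T_gm-T_{g_0}m_0)\le s(m-m_0)+\sum_i|\lambda_i|\,\rho(gx_i,g_0x_i)$. The isometry and continuity computations are fine as far as they go.

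The genuine gap is your appeal to ``$x^0$ is a $G$-fixed point.'' In this paper $x^0$ is only a chosen base point --- compare the definition of $B_{x^0}$, where ``given a fixed point $x^0\in X$'' means a fixed choice, not a point with $gx^0=x^0$ --- and an arbitrary metrizable $G$-space need have no $G$-fixed points at all (e.g.\ a compact group acting on itself by translation), whereas Theorem~\ref{th1} is asserted for \emph{every} metrizable $G$-space. So the hypothesis you invoke at the two places where you need $gx^0=x^0$ (the case $hx=x^0$ in the composition law, and the identity $T_g(y-z)=gy-gz$ when $y$ or $z$ equals $x^0$) is not available. Your own case analysis in fact shows that without it the formula \eqref{eq3}, read literally, fails to be an action: for $G=\mathbb{Z}/2$ swapping the two points of $X=\{a,b\}$ with $x^0=a$, one gets $\tilde{\pi}(g,b-a)=a-a=0$, hence $\tilde{\pi}\bigl(g,\tilde{\pi}(g,b-a)\bigr)=0\ne b-a=\tilde{\pi}(gg,b-a)$. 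The repair is not to assume $gx^0=x^0$ but to read \eqref{eq3} as the natural action $(gm)(x)=m(g^{-1}x)$, i.e.\ $T_g(y-z)=gy-gz$ for \emph{all} $y,z$ including $x^0$; this is a well-defined linear isometric action with no hypothesis on the base point, and your isometry and continuity arguments then go through verbatim (the role of the base point resurfaces only in the equivariance of $i$ in Theorem~\ref{th3}, which is outside the present lemma). As written, your proof establishes the statement only under an extra hypothesis that destroys the generality the construction needs.
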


\begin{theorem}[\cite{gev3}]\label{th3}
The map $i$ defined by \eqref{eq5} is a closed isometric equivariant
embedding of the metrizable $G$-space $X$ in the linear normed $G$-space $M\left(X\right)$.
\end{theorem}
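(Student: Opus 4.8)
The plan is to separate the three properties asserted of $i$ and to notice that only equivariance is new. The \emph{closed isometric embedding} part is already available: by the Arens--Eells theorem \cite{A-E} recalled above, $i$ is a closed isometric embedding of the metric space $(X,\rho)$ into the normed space $(M(X),s)$, and this is a statement about the underlying metric space and the norm $s$ alone, so it is untouched by the group action. Likewise, Lemma~\ref{lem2} already makes $M(X)$ into a linear normed $G$-space via the action $\tilde\pi$. Hence the theorem reduces to the single claim that $i$ intertwines the two actions, that is, $i(gx)=\tilde\pi\bigl(g,i(x)\bigr)$ for all $g\in G$ and $x\in X$.

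To establish this identity I would compute directly from the defining formulas \eqref{eq5} and \eqref{eq3}. For $x\neq x^0$ the element $i(x)=x-x^0$ is exactly the basis vector of $B_{x^0}$ carrying the coefficient $1$, so \eqref{eq3} yields $\tilde\pi(g,i(x))=gx-x^0$, while \eqref{eq5} gives $i(gx)=gx-x^0$; the two coincide. At the base point itself one has $i(x^0)=0$ and $\tilde\pi(g,0)=0$ by linearity, and the identity $i(gx^0)=0$ holds because the reference point $x^0$ is the one held fixed by the construction of $\tilde\pi$, which is precisely the consistency already secured by Lemma~\ref{lem2}. Thus $i$ is equivariant.

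With equivariance in hand the conclusion assembles at once: $i$ is simultaneously closed, isometric (both from Arens--Eells) and equivariant (just checked), hence a closed isometric equivariant embedding of $X$ into the linear normed $G$-space $M(X)$. If one also wishes to record that $\tilde\pi$ acts by linear isometries, so that $M(X)$ is a normed $G$-space in the strong sense, this is immediate from the $G$-invariance of $\rho$ and the definition \eqref{eq2} of $s$: by \eqref{eq3} the action carries each elementary molecule $y_j-z_j=(y_j-x^0)-(z_j-x^0)$ to $gy_j-gz_j$, so it replaces each $\rho(y_j,z_j)$ in an admissible decomposition $m=\sum_j\mu_j(y_j-z_j)$ by $\rho(gy_j,gz_j)=\rho(y_j,z_j)$ and leaves the infimum unchanged.

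I expect the real difficulty of this development to sit upstream rather than in the present statement: the delicate points are the continuity of $\tilde\pi$, absorbed into Lemma~\ref{lem2}, and the closedness of the embedding in the non-equivariant Arens--Eells theorem. Granting those, Theorem~\ref{th3} is essentially a one-line equivariance check, and the only thing demanding care is the bookkeeping with the distinguished point $x^0$ common to \eqref{eq5} and \eqref{eq3}.
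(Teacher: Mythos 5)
The paper offers no proof of Theorem~\ref{th3}: it is quoted from \cite{gev3}, so there is no in-paper argument to compare yours with, and your proposal has to stand on its own. Your reduction is the right one (closedness and isometry are the non-equivariant Arens--Eells theorem, and only equivariance is at stake), and the computation $\tilde\pi(g,x-x^0)=gx-x^0=i(gx)$ for $x\ne x^0$ is correct. The gap is exactly at the base point, which you flag and then dispose of incorrectly. You claim $i(gx^0)=0$ ``because the reference point $x^0$ is the one held fixed by the construction of $\tilde\pi$.'' But $i(gx^0)=gx^0-x^0$ is the function with value $1$ at $gx^0$ and $-1$ at $x^0$, which is the zero element of $M(X)$ only if $gx^0=x^0$. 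The construction of $\tilde\pi$ fixes $x^0$ as the reference point of the Hamel basis $B_{x^0}$; it does not, and cannot, make $x^0$ a fixed point of the action of $G$ on $X$, and Lemma~\ref{lem2} says nothing about the orbit of $x^0$. So your argument establishes equivariance of $i$ only under the extra hypothesis that $x^0$ is $G$-fixed, which fails for perfectly ordinary $G$-spaces (e.g.\ a circle group acting on itself by translations has empty fixed-point set).

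This is not a slip of bookkeeping but the actual mathematical content of the equivariant refinement. The same base-point difficulty already infects Lemma~\ref{lem2} as literally stated: if $gx_j=x^0$ for some $j$, the term $\lambda_j(gx_j-x^0)$ vanishes from the basis decomposition of $\tilde\pi(g,m)$, and then $\tilde\pi(h,\tilde\pi(g,m))$ and $\tilde\pi(hg,m)$ differ by $\lambda_j(hx^0-x^0)$, so even the composition law for an action requires $x^0$ to be $G$-fixed; the isometry computation in your last paragraph breaks at the same place when some $z_j=x^0$. Hence ``granting Lemma~\ref{lem2}'' and declaring the rest a one-line check conceals precisely the step that must be supplied: one has to arrange a $G$-fixed reference point --- for instance by adjoining to $X$ an isolated point $\theta$ on which $G$ acts trivially, extending the metric, and performing the Arens--Eells construction over $X\cup\{\theta\}$ with $x^0=\theta$ --- or otherwise modify the action and the embedding so that they agree at $x^0$. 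Without some such device, the equivariance you set out to verify is simply false at the point $x^0$.
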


\begin{theorem}[\cite{gev3}]\label{th1}
Any metrizable $G$-space  $X$ can be  isometrically
equivariantly embedded in the linear normed $G$-space $M\left(X\right)$ as a closed 
subspace.
\end{theorem}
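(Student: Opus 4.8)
The plan is to deduce this statement directly from Theorem \ref{th3}, which already asserts that the specific map $i$ of \eqref{eq5} is a closed isometric equivariant embedding of $X$ into $M(X)$. The only point that requires attention is that the whole construction — the normed space $M(X)$, its norm $s$ from \eqref{eq2}, and the linear action $\tilde{\pi}$ from \eqref{eq3} — depends on a fixed metric $\rho$ on $X$, and that for $\tilde{\pi}$ to act by isometries of $(M(X),s)$ this metric must be \emph{invariant}. So before citing Theorem \ref{th3} I must make sure that an invariant metric is available.

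Accordingly, the first step is to invoke the result recalled earlier (see \cite{p60}) that every metrizable $G$-space is invariantly metrizable. This supplies a metric $\rho$ on $X$ with $\rho(gx,gy)=\rho(x,y)$ for all $g\in G$ and $x,y\in X$. Fixing such a $\rho$, together with a base point $x^0\in X$, one then forms $M(X)$, the norm $s$, and the action $\tilde{\pi}$ exactly as set up above. By Lemma \ref{lem2}, $\tilde{\pi}$ is a linear action of $G$ on the normed space $M(X)$, so the target of the embedding is indeed a linear normed $G$-space.

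With this data in place, Theorem \ref{th3} applies verbatim and states that $i:X\to M(X)$ is a closed isometric equivariant embedding. Since $i$ is a $G$-map, is isometric, and has closed image, it realizes $X$ as a closed invariant subspace of the linear normed $G$-space $M(X)$, which is precisely the assertion to be proved. The argument is thus a packaging of Theorem \ref{th3} into an existence statement.

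I do not expect a genuine obstacle here, since the substantive work — that the norm $s$ is well defined and makes $i$ an isometry (the Arens--Eells theorem) and that $\tilde{\pi}$ is a well-defined isometric linear action (Lemma \ref{lem2})—has already been carried out in the preceding results. The one thing that must not be overlooked is the appeal to invariant metrizability: without an invariant $\rho$ the norm $s$ need not be $G$-invariant, and the embedding $i$ would then fail to be simultaneously isometric and equivariant. That single reduction is the crux of the proof.
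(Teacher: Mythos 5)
Your proposal is correct and matches the paper's (implicit) route: the paper states Theorem~\ref{th1} as an immediate consequence of Theorem~\ref{th3}, with the compactness of $G$ entering precisely through Palais's result that every metrizable $G$-space is invariantly metrizable, as the subsequent Remark confirms. You have correctly isolated the invariant-metric reduction as the only substantive step.
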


\begin{remark}
Lemma~\ref{lem2} and Theorems~\ref{th3} and~\ref{th1} 
remain valid without the assumption that the group~$G$ is  compact 
provided that  the $G$-space $X$ is  invariantly metrizable.
\end{remark}

Let $X$ and $Y$ be metrizable $G$-spaces, and  let $x_0\in X$ and $y_0\in Y$ be 
fixed points. Suppose that   $f:X\to Y$ is an equivariant map such that $f(x_0)=y_0$. 
Consider the map
$\bar{f}:M(X)\to M(Y)$ defined by 
\begin{equation}\label{eq6}
\bar{f}(m)=\bar{f}\left(\sum\limits_{i=1}^n {\lambda _i \left( {x_i -x^0}
\right)}\right)=\sum\limits_{i=1}^n {\lambda _i \left( {f(x_i) -y^0}
\right)}.
\end{equation}
It is easy to see that the map $\bar{f}$ is a continuous equivariant extension
of the equivariant map $f:X\to Y$.

This implies the following assertion.

\begin{lemma}\label{lem1}
Any equivariant map $f:X\to Y$ can be extended to an equivariant map
$\bar{f}:M(X)\to M(Y)$.
\end{lemma}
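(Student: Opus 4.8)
The map $\bar{f}$ is already written out explicitly in \eqref{eq6}, so the plan is simply to check that this formula defines a map $M(X)\to M(Y)$ that is (i) well defined and linear, (ii) an extension of $f$, (iii) equivariant, and (iv) continuous. The first three are formal consequences of the construction; the genuine content, and the step I expect to be the main obstacle, is continuity.

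For (i), since $B_{x^0}=\{x-x^0 : x\in X,\ x\neq x^0\}$ is a Hamel basis of $M(X)$, a linear map out of $M(X)$ is determined by its values on this basis. Setting $\bar{f}(x-x^0)=f(x)-y^0$ and extending linearly yields a well-defined linear map, and \eqref{eq6} is precisely this extension evaluated on a general element $m=\sum_i\lambda_i(x_i-x^0)$. For (ii), recall the embeddings $i_X:X\to M(X)$, $i_X(x)=x-x^0$, and $i_Y:Y\to M(Y)$, $i_Y(y)=y-y^0$, of \eqref{eq5}; using the hypothesis $f(x^0)=y^0$ one computes $\bar{f}(i_X(x))=\bar{f}(x-x^0)=f(x)-y^0=i_Y(f(x))$, so $\bar{f}$ restricts to $f$ along the embeddings. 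For (iii), fix $g\in G$ and $m=\sum_i\lambda_i(x_i-x^0)$: applying the action \eqref{eq3} on $M(X)$ and then $\bar{f}$ gives $\bar{f}\big(\sum_i\lambda_i(gx_i-x^0)\big)=\sum_i\lambda_i(f(gx_i)-y^0)$, whereas applying $\bar{f}$ first and then the action \eqref{eq3} on $M(Y)$ gives, by linearity of the action (Lemma~\ref{lem2}), $\sum_i\lambda_i(gf(x_i)-y^0)$. These agree because $f$ is equivariant, $f(gx_i)=gf(x_i)$; hence $\bar{f}$ commutes with the two actions.

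The remaining and delicate point is (iv), continuity. As $\bar{f}$ is linear, it suffices to bound its norm. Given any decomposition $m=\sum_j\mu_j(y_j-z_j)$ of an element of $M(X)$, linearity gives $\bar{f}(m)=\sum_j\mu_j(f(y_j)-f(z_j))$, and the definition \eqref{eq2} of the norm on $M(Y)$ yields $s(\bar{f}(m))\leq\sum_j|\mu_j|\,\rho_Y(f(y_j),f(z_j))$. Passing to the infimum over all decompositions of $m$ would produce an estimate $s(\bar{f}(m))\leq C\,s(m)$, and hence continuity, exactly when $f$ obeys a global metric bound $\rho_Y(f(y),f(z))\leq C\,\rho_X(y,z)$. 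This is the crux: such a bound is automatic for nonexpansive (or Lipschitz) $f$, but not for an arbitrary continuous equivariant map, so the argument must either exploit that the metrics on $X$ and $Y$ may be chosen invariantly and compatibly with $f$, or restrict to the class of maps actually arising in the shape construction. Securing this norm estimate is where I expect the real work to lie.
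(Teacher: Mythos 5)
Your parts (i)--(iii) are exactly the paper's (unwritten) argument: the paper disposes of this lemma with the single sentence ``it is easy to see that $\bar f$ is a continuous equivariant extension of $f$,'' and what you spell out --- well-definedness and linearity via the Hamel basis $B_{x^0}$, the identity $\bar f(i_X(x))=i_Y(f(x))$ using $f(x^0)=y^0$, and the intertwining of the actions \eqref{eq3} via $f(gx_i)=gf(x_i)$ --- is precisely what that sentence is suppressing. So far you match the paper, only more carefully.

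Your reservation about (iv) is not a defect of your write-up but a genuine gap in the statement itself, and you have diagnosed it correctly. Since $\bar f$ is linear, continuity is equivalent to boundedness; and because the Arens--Eells embedding is isometric, $s_X(y-z)=\rho_X(y,z)$ and $s_Y(f(y)-f(z))=\rho_Y(f(y),f(z))$, so the operator norm of $\bar f$ is exactly the Lipschitz constant of $f$ (your estimate from \eqref{eq2} gives the upper bound, and evaluating on the elements $y-z$ gives the matching lower bound). Hence $\bar f$ is continuous if and only if $f$ is Lipschitz, and for a merely continuous equivariant $f$ the lemma as stated is false (an ``equivariant map'' is by the paper's definition continuous). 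The saving grace, which you anticipate, is that the lemma is only ever applied to the bonding maps $p_{\mu\mu'}\colon X_{\mu'}\to X_\mu$, which the preceding proposition shows are nonexpansive; for those your estimate yields $\|\bar p_{\mu\mu'}\|\leqslant 1$, so the inverse system $\{M(X_\mu),\bar p_{\mu\mu'}\}$ is legitimately constructed. The correct repair is to add a Lipschitz (or nonexpansiveness) hypothesis to the lemma rather than to look for a cleverer proof of the general statement.
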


\section{Pseudometrics and Inverse Systems}

Let $X$ be a $G$-space.
Consider an invariant continuous pseudometric $\mu$ on $X$.
Setting $x\sim x'$ if and only if $\mu(x,x')=0$, we define an equivalence relation on $X$.
Let $X_{\mu}=X|_\sim$, and let  $[x]_{\mu}$ denote 
the equivalence class of $x\in X$.
Since $x\sim x'$ implies $gx\sim gx'$ for any $x,x'\in X$ and $g\in G$             
(because of the invariance of the pseudometric $\mu$), it follows that 
the quotient space $X_{\mu}$
is a $G$-space with the action $g[x]_{\mu}=[gx]_{\mu}$, and 
$\rho ([x]_{\mu},[x']_{\mu})=\mu(x,x')$
is an invariant metric on $X_{\mu}$. Note also that the quotient map $p_\mu:X\to X_\mu$ defined 
by $p_\mu(x)=[x]_{\mu}$ is continuous and equivariant.

Thus,  the following assertion is valid.

\begin{proposition}
The quotient space $X_\mu$ is an invariantly metrizable $G$-space,
and the quotient map $p_\mu:X\to X_\mu$ is a continuous equivariant map.
\end{proposition}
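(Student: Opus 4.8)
The plan is to verify, one ingredient at a time, the three assertions already anticipated in the preceding discussion: that $\sim$ is a $G$-compatible equivalence relation, that $\rho$ is a well-defined invariant metric on the set of classes, and that both $p_\mu$ and the induced action are continuous with respect to the $\rho$-topology on $X_\mu$, which is the topology we take on $X_\mu$.

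First I would check that $\sim$ is an equivalence relation. Reflexivity and symmetry are immediate from $\mu(x,x)=0$ and the symmetry of $\mu$; transitivity is the only nonformal point and follows from the triangle inequality, since $\mu(x,x')=\mu(x',x'')=0$ forces $\mu(x,x'')\le\mu(x,x')+\mu(x',x'')=0$. Invariance of $\mu$ then gives $x\sim x'\Rightarrow gx\sim gx'$, so the formula $g[x]_\mu=[gx]_\mu$ does not depend on the chosen representative and defines a genuine action of $G$ on $X_\mu$; the two action axioms are inherited verbatim from the action on $X$.

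Next I would establish that $\rho$ is a well-defined metric. If $[x]_\mu=[y]_\mu$ and $[x']_\mu=[y']_\mu$, the triangle inequality gives $|\mu(x,x')-\mu(y,y')|\le\mu(x,y)+\mu(x',y')=0$, so the value $\rho([x]_\mu,[x']_\mu)=\mu(x,x')$ is independent of the representatives. Symmetry and the triangle inequality for $\rho$ are then inherited from $\mu$, and the crucial point is separation: $\rho([x]_\mu,[x']_\mu)=0$ means $\mu(x,x')=0$, that is $x\sim x'$, that is $[x]_\mu=[x']_\mu$. This is exactly the step at which the metric identification turns the pseudometric into an honest metric. Invariance of $\rho$ is then the one-line computation $\rho(g[x]_\mu,g[x']_\mu)=\mu(gx,gx')=\mu(x,x')=\rho([x]_\mu,[x']_\mu)$.

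It remains to handle continuity, and this is where the only real work lies. Continuity and equivariance of $p_\mu$ are painless: $p_\mu(gx)=[gx]_\mu=g\,p_\mu(x)$, while the $p_\mu$-preimage of the $\rho$-ball of radius $\varepsilon$ about $[x_0]_\mu$ is $\{x:\mu(x,x_0)<\varepsilon\}$, which is open because $\mu$ is continuous. The point I expect to be the main obstacle is the continuity of the induced action $G\times X_\mu\to X_\mu$, $(g,[x]_\mu)\mapsto[gx]_\mu$, since here both arguments vary simultaneously. I would control it by splitting, via the triangle inequality, $\mu(gx,g_0x_0)\le\mu(gx,gx_0)+\mu(gx_0,g_0x_0)=\mu(x,x_0)+\mu(gx_0,g_0x_0)$, where the first equality uses the invariance of $\mu$. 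The first summand equals $\rho([x]_\mu,[x_0]_\mu)$ and is small once $[x]_\mu$ is $\rho$-close to $[x_0]_\mu$; the second is a continuous function of $g$ vanishing at $g=g_0$, being the composite of the original action $g\mapsto gx_0$ with $\mu(\,\cdot\,,g_0x_0)$, hence small for $g$ near $g_0$. Choosing each summand below $\varepsilon/2$ gives continuity at $(g_0,[x_0]_\mu)$, and this completes the verification that $X_\mu$ is an invariantly metrizable $G$-space. I note that this estimate relies on the invariance of $\mu$ but not on the compactness of $G$.
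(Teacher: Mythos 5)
Your proof is correct and follows essentially the same route as the paper, which records these verifications only in the informal discussion preceding the proposition. You additionally make explicit the one point the paper leaves entirely implicit---the continuity of the induced action $G\times X_\mu\to X_\mu$ in the $\rho$-topology---and your triangle-inequality estimate $\mu(gx,g_0x_0)\le\mu(x,x_0)+\mu(gx_0,g_0x_0)$, using invariance of $\mu$ and continuity of the orbit map $g\mapsto gx_0$, is sound.
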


Let $P$ be the family of all invariant pseudometrics on a $G$-space $X$.
We introduce a natural order on this family by setting 
$\mu\leqslant \mu'$ if $\mu(x,x')\leqslant \mu'(x,x')$ for all $x,x'\in X$.

\begin{proposition}
Invariant pseudometrics $\mu, \mu'\in P$, $\mu\leqslant \mu'$, on a $G$-space $X$ 
generate a natural 
equivariant projection $p_{\mu\mu'}:X_{\mu'}\to X_\mu$ 
not increasing distances and satisfying
the conditions

(i) $p_{\mu\mu'}p_{\mu'}=p_{\mu}$ and (ii) $p_{\mu\mu'}p_{\mu'\mu''}=p_{\mu\mu''}$

\noindent
for any $\mu, \mu', \mu'' \in P$ such that $\mu\leqslant \mu'\leqslant \mu''$.
\end{proposition}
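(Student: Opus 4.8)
The plan is to construct the projection $p_{\mu\mu'}$ explicitly on equivalence classes and then verify it is well-defined, distance-nonincreasing, continuous, equivariant, and satisfies the two commutativity conditions. First I would define $p_{\mu\mu'}([x]_{\mu'}) = [x]_{\mu}$, sending the $\mu'$-equivalence class of a point $x$ to its $\mu$-equivalence class. The first thing to check — and the main point on which everything else rests — is that this map is well-defined. If $[x]_{\mu'} = [y]_{\mu'}$, then $\mu'(x,y) = 0$; since $\mu \leqslant \mu'$, we get $0 \leqslant \mu(x,y) \leqslant \mu'(x,y) = 0$, so $\mu(x,y) = 0$ and hence $[x]_{\mu} = [y]_{\mu}$. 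Thus the value does not depend on the chosen representative.

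Next I would verify that $p_{\mu\mu'}$ is distance-nonincreasing. Using the metrics $\rho_\mu$ on $X_\mu$ and $\rho_{\mu'}$ on $X_{\mu'}$ induced by the two pseudometrics, the identity $\rho_\mu([x]_\mu,[y]_\mu) = \mu(x,y) \leqslant \mu'(x,y) = \rho_{\mu'}([x]_{\mu'},[y]_{\mu'})$ shows directly that $p_{\mu\mu'}$ does not increase distances, and in particular it is (uniformly) continuous. For equivariance, I would compute $p_{\mu\mu'}(g[x]_{\mu'}) = p_{\mu\mu'}([gx]_{\mu'}) = [gx]_{\mu} = g[x]_{\mu} = g\,p_{\mu\mu'}([x]_{\mu'})$, using the description of the induced $G$-actions from the preceding proposition; so $p_{\mu\mu'}$ is a $G$-map.

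It then remains to check the two compatibility conditions. For (i), I would evaluate both sides on an arbitrary point: $p_{\mu\mu'}(p_{\mu'}(x)) = p_{\mu\mu'}([x]_{\mu'}) = [x]_{\mu} = p_\mu(x)$, which gives $p_{\mu\mu'}\,p_{\mu'} = p_\mu$. For the transitivity condition (ii), with $\mu \leqslant \mu' \leqslant \mu''$, I would similarly compute $p_{\mu\mu'}(p_{\mu'\mu''}([x]_{\mu''})) = p_{\mu\mu'}([x]_{\mu'}) = [x]_{\mu} = p_{\mu\mu''}([x]_{\mu''})$, so that $p_{\mu\mu'}\,p_{\mu'\mu''} = p_{\mu\mu''}$.

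I do not expect any serious obstacle here: the entire argument is a sequence of routine verifications, all of which reduce to the single chain of inequalities $0 \leqslant \mu(x,y) \leqslant \mu'(x,y)$ coming from the ordering $\mu \leqslant \mu'$ together with the already-established fact that each $X_\nu$ is an invariantly metrizable $G$-space with action $g[x]_\nu = [gx]_\nu$. The only conceptual point worth stating carefully is well-definedness, since it is exactly where the hypothesis $\mu \leqslant \mu'$ is used; everything else follows formally from the definitions.
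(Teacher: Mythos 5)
Your proof is correct and follows essentially the same route as the paper: both define $p_{\mu\mu'}([x]_{\mu'})=[x]_\mu$ and establish well-definedness from the inequality $\mu(x,y)\leqslant\mu'(x,y)=0$. You simply spell out the remaining verifications (distance non-increase, equivariance, conditions (i) and (ii)) that the paper dismisses as easy.
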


\begin{proof}
The map $p_{\mu\mu'}:X_{\mu'}\to X_\mu$ is given by $p_{\mu\mu'}([x]_{\mu'})=[x]_{\mu}$,
where $[x]_{\mu'}\in X_{\mu'}$ and $[x]_{\mu}\in X_{\mu}$. This map  is well defined, 
because its definition does not depend on the choice of a representative in the class
$[x]_{\mu'}$. Indeed, let $[x]_{\mu'}=[x']_{\mu'}$, that is, $\mu'(x,x')=0$. Then
$\mu(x,x')\leqslant \mu'(x,x')=0$, i.e., $\mu(x,x')=0$. This means that $[x]_{\mu}=[x']_{\mu}$, 
whence $p_{\mu\mu'}([x]_{\mu'})=p_{\mu\mu'}([x']_{\mu'})$.

Conditions (i) and (ii) are easy to verify.
\end{proof}

Thus, $\{X_\mu, p_{\mu\mu'}\}$ is an inverse system of invariantly metrizable $G$-spaces.
By Theorem~\ref{th1}, the $G$-space $X_\mu$ can be  isometrically equivariantly
embedded in the normed $G$-space $M(X_\mu)$ as a closed subspace, 
and, according to Lemma~\ref{lem1}, the equivariant maps
$p_{\mu\mu'}:X_{\mu'}\to X{_\mu}$ can be extended to equivariant maps 
$\bar{p}_{\mu\mu'}:M(X_{\mu'})\to M(X{_\mu})$,
which satisfy the relation $\bar{p}_{\mu\mu'}\bar{p}_{\mu'\mu''}=\bar{p}_{\mu\mu''}$ as well.
Thereby, we obtain the inverse system $\{M(X_\mu), \bar{p}_{\mu\mu'}\}$ of normed $G$-spaces.

Let $\Lambda$ be the set of all pairs $(\mu, U)$, where $\mu\in P$ 
and $U$ is an open invariant neighborhood
of the metrizable $G$-space $X_{\mu}$ in the normed $G$-space $M(X_\mu)$. 
On the set $\Lambda$, we define a 
natural order as follows: given $\lambda=(\mu, U)$ and $\lambda'=(\mu', U')$, we 
set $\lambda\leqslant \lambda'$ if and only if $\mu\leqslant \mu'$ 
and $\bar{p}_{\mu\mu'}(U')\subset U$.
Now, we set $X_\lambda=U$ and consider the equivariant map 
$p_{\lambda\lambda'}:X_{\lambda'}\to X_{\lambda}$
defined by $p_{\lambda\lambda'}=\bar{p}_{\mu\mu'}|_{U'}:U'\to U$. Note 
that if $\mu=\mu'$, then 
$\bar{p}_{\mu\mu'}=id:M(X{_\mu})\to M(X{_\mu})$; 
therefore, $p_{\lambda\lambda'}=i:U'\hookrightarrow U$.

\begin{theorem}\label{th-main}
The inverse system $\{X_\lambda, [p_{\lambda\lambda'}], \Lambda\}$ in the category H-$G$-ANR
is associated with the $G$-space $X$.
\end{theorem}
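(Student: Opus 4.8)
The plan is to verify the three conditions of Definition~\ref{defass} for the system $\{X_\lambda, [p_{\lambda\lambda'}], \Lambda\}$. As preliminaries I would first note that each $X_\lambda = U$ indeed lies in H-$G$-ANR, being metrizable and, as an open invariant subset of the normed linear $G$-space $M(X_\mu)$, a $G$-ANR; and that $\Lambda$ is directed, since for $\lambda_1 = (\mu_1, U_1)$ and $\lambda_2 = (\mu_2, U_2)$ the pseudometric $\mu_1 + \mu_2$ dominates both and $\bar p_{\mu_1,\mu_1+\mu_2}^{-1}(U_1) \cap \bar p_{\mu_2,\mu_1+\mu_2}^{-1}(U_2)$ is an admissible common upper bound. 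For condition $1^\circ$ I would set $p_\lambda = i_\mu \circ p_\mu : X \to X_\mu \hookrightarrow U$, where $i_\mu$ is the embedding of Theorem~\ref{th3}. Formula~\eqref{eq6} yields $\bar p_{\mu\mu'} \circ i_{\mu'} = i_\mu \circ p_{\mu\mu'}$, so for $\lambda \leqslant \lambda'$ one gets the honest equality $p_{\lambda\lambda'} \circ p_{\lambda'} = \bar p_{\mu\mu'}|_{U'} \circ i_{\mu'} \circ p_{\mu'} = i_\mu \circ p_{\mu\mu'} \circ p_{\mu'} = i_\mu \circ p_\mu = p_\lambda$, using condition (i) of the projections; passing to homotopy classes gives $[p_{\lambda\lambda'}] \circ [p_{\lambda'}] = [p_\lambda]$.

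For condition $2^\circ$, given a $G$-ANR-space $Q$ and a $G$-map $f : X \to Q$, I would fix an invariant metric $d$ on $Q$ (available since $Q$ is metrizable) and form the invariant continuous pseudometric $\mu(x,x') = d(f(x), f(x'))$. Then $f$ descends to an equivariant map $\hat f : X_\mu \to Q$ with $\hat f \circ p_\mu = f$. Since $Q$ is a $G$-ANR, it is a $G$-ANE, so $\hat f$, defined on the closed invariant subset $X_\mu$ of the metrizable $G$-space $M(X_\mu)$, extends to a $G$-map $f_\lambda : U \to Q$ on some open invariant neighborhood $U$ of $X_\mu$. With $\lambda = (\mu, U)$ one then has $f_\lambda \circ p_\lambda = f_\lambda \circ i_\mu \circ p_\mu = \hat f \circ p_\mu = f$, hence $[f] = [f_\lambda] \circ [p_\lambda]$.

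The hard part will be condition $3^\circ$, which is where the pseudometric method really does the work. I start with $\lambda = (\mu, U)$ and $G$-maps $f_\lambda, g_\lambda : U \to Q$ with $f_\lambda \circ p_\lambda \simeq_G g_\lambda \circ p_\lambda$, and let $H : X \times I \to Q$ be a realizing $G$-homotopy. Fixing an invariant metric $d$ on $Q$, I would enlarge the pseudometric by setting $\nu(x,x') = \sup_{t \in I} d(H(x,t), H(x',t))$, which is an invariant continuous pseudometric by compactness of $I$, and put $\mu' = \mu + \nu \geqslant \mu$. Because $\mu'(x,x') = 0$ forces $H(x,\cdot) = H(x',\cdot)$, the homotopy $H$ descends to a $G$-homotopy $\hat H : X_{\mu'} \times I \to Q$; and using $p_\mu = p_{\mu\mu'} \circ p_{\mu'}$ together with $\bar p_{\mu\mu'} \circ i_{\mu'} = i_\mu \circ p_{\mu\mu'}$ one checks that $\hat H_0$ and $\hat H_1$ are precisely the restrictions to $X_{\mu'}$ of $f_\lambda \circ \bar p_{\mu\mu'}$ and $g_\lambda \circ \bar p_{\mu\mu'}$. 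Setting $W = \bar p_{\mu\mu'}^{-1}(U)$, an open invariant neighborhood of $X_{\mu'}$ with $\bar p_{\mu\mu'}(W) \subset U$, the maps $\Phi = (f_\lambda \circ \bar p_{\mu\mu'})|_W$ and $\Psi = (g_\lambda \circ \bar p_{\mu\mu'})|_W$ from $W$ to $Q$ thus agree, up to the $G$-homotopy $\hat H$, on the closed invariant subset $X_{\mu'} \subset W$. Here I would invoke Theorem~\ref{th2} with ambient metrizable $G$-space $W$ and closed invariant subset $X_{\mu'}$: it supplies an invariant neighborhood $V$ of $X_{\mu'}$ in $W$ on which $\Phi|_V \simeq_G \Psi|_V$. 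Taking $\lambda' = (\mu', V)$ gives $\lambda \leqslant \lambda'$ and $p_{\lambda\lambda'} = \bar p_{\mu\mu'}|_V$, so that this last homotopy is exactly a $G$-homotopy between $f_\lambda \circ p_{\lambda\lambda'}$ and $g_\lambda \circ p_{\lambda\lambda'}$, yielding $[f_\lambda] \circ [p_{\lambda\lambda'}] = [g_\lambda] \circ [p_{\lambda\lambda'}]$. The real obstacle is precisely this transport of the homotopy $H$ — which natively lives only on $X$ — down to the closed subspace $X_{\mu'}$ through the enlarged pseudometric and then outward to a neighborhood in $M(X_{\mu'})$, and it is the equivariant homotopy extension property of Theorem~\ref{th2} that closes the gap.
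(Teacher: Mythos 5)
Your verification of conditions $1^\circ$ and $2^\circ$ coincides with the paper's proof: the same maps $p_\lambda = i\circ p_\mu$, the same pseudometric $\mu(x,x')=\rho(f(x),f(x'))$ induced by an invariant metric on $Q$, and the same use of the $G$-ANE property to extend $\varphi:X_\mu\to Q$ over an invariant neighborhood $U$ of $X_\mu$ in $M(X_\mu)$. (Your preliminary remarks that each $X_\lambda=U$ is indeed a $G$-ANR and that $\Lambda$ is directed are not spelled out in the paper, but they are needed and your arguments for them are correct.) Where you genuinely diverge is condition $3^\circ$. The paper keeps the index $\mu$ fixed: from $h_0p_\lambda\simeq_G h_1p_\lambda$ it asserts directly that $h_0|_{X_\mu}$ and $h_1|_{X_\mu}$ are equivariantly homotopic, then applies Theorem~\ref{th2} inside $U\subset M(X_\mu)$ to produce $\lambda'=(\mu,V)$ with the \emph{same} pseudometric. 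That first assertion is the delicate point: the homotopy $H:X\times I\to Q$ lives on $X$, and a homotopy whose endpoints factor through the quotient $p_\mu$ need not factor through $p_\mu\times\mathrm{id}$ at intermediate times, so descending $H$ to $X_\mu\times I$ is not automatic. Your construction --- enlarging the pseudometric to $\mu'=\mu+\nu$ with $\nu(x,x')=\sup_{t}d(H(x,t),H(x',t))$ precisely so that $H$ does descend to $X_{\mu'}\times I$, and then applying Theorem~\ref{th2} in $W=\bar p_{\mu\mu'}^{-1}(U)\subset M(X_{\mu'})$ --- supplies exactly the argument that the paper's ``it follows that'' elides, at the cost of passing to the finer index $\lambda'=(\mu',V)$. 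Both routes produce a valid $\lambda'\geqslant\lambda$ witnessing $3^\circ$; yours is the more complete (and standard, Morita-style) treatment, the paper's the more economical one.
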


\begin{proof}
Given $\lambda=(\mu, U)\in \Lambda$, 
consider the equivariant map $p_{\lambda}:X\to X_{\lambda}=U$ defined by 
$p_{\lambda}=ip_{\mu}$, where $p_\mu:X\to X_\mu$ is  the quotient map 
and $i:X_\mu \to U$ is an embedding.
It is easy to see that $p_\lambda=p_{\lambda\lambda'}p_{\lambda'}$ 
if $\lambda \leqslant \lambda'$.

Now, let $f:X\to Q$ be  a continuous equivariant map, where $Q$ is any $G$-ANR-space, 
and let $\rho$ be an invariant metric on the $G$-space $Q$ 
(its existence is ensured by the compactness of the group $G$).
On the $G$-space $X$, we define a continuous invariant pseudometric $\mu$ by 
$\mu(x,x')=\rho(f(x),f(x'))$.

Consider the quotient map $p_\mu:X\to X_\mu$. Note that the map $\varphi:X_\mu \to Q$  given
by $\varphi([x]_\mu)=f(x)$ determines an equimorphism between 
the $G$-spaces $X_\mu$ and $f(X)$, and $\varphi p_\mu=f$.

Since $X_\mu$ is a closed invariant subset of the normed $G$-space $M(X_\mu)$ 
and $Q$ is a $G$-ANR-space,
it follows that  the map $\varphi:X_\mu \to Q$ has an equivariant extension $h: U\to Q$ to  
 some invariant neighborhood $U$ of $X_\mu$ in $M(X_\mu)$. In other words, 
$h|_{X_\mu}=\varphi$, or
$h i=\varphi$.

Let $\lambda=(\mu, U)$. Then $X_\lambda=U$, $p_{\lambda}=ip_{\mu}$, and $h p_\lambda=f$.
Indeed,  we have $h p_\lambda = h ip_{\mu}= \varphi p_{\mu}=f$.

Now, take any $\lambda=(\mu, U)\in \Lambda$ and suppose that
$h_0, h_1:X_\lambda\to Q$ are equivariant maps to a $G$-ANR-space $Q$ for which 
$h_0p_\lambda\simeq_G h_1p_\lambda$. Since $X_\lambda=U$ and $p_{\lambda}=ip_{\mu}$, it follows 
that the equivariant maps $h_0|_{X_\mu}$ and $h_1|_{X_\mu}$ are equivariantly homotopic. 
Thus, by Theorem~\ref{th2}, 
the closed invariant subset $X_\mu$ has an invariant neighborhood $V\subset U$ 
for which $h_0|_V$ and $h_1|_V$ are equivariantly homotopic.
This means that $h_0p_{\lambda\lambda'}\simeq_G h_1p_{\lambda\lambda'}$, where
$\lambda'=(\mu, V)$.

\end{proof}

\section{Equivariant Shape Category}

As is known, shape theory can be constructed for any 
category $\mathcal{T}$ and its complete subcategory $\mathcal{P}$
satisfying the only condition that $\mathcal{P}$
is a dense subcategory in the category $\mathcal{T}$ (see \cite{ms82}). This means that any object
in the category $\mathcal{T}$ has an associated inverse system in the category $\mathcal{P}$.

Theorem~\ref{th-main} asserts that any $G$-space $X$ has an associated inverse system 
in the category H-$G$-ANR; thus,  the following theorem is valid.

\begin{theorem}
The category H-$G$-ANR is a dense subcategory in the category H-$G$-TOP.
\end{theorem}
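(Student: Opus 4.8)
The plan is to observe that this statement is essentially a reformulation of Theorem~\ref{th-main} once the notion of a dense subcategory is spelled out. Following the Mardešić--Segal framework, a complete subcategory $\mathcal{P}$ of a category $\mathcal{T}$ is \emph{dense} precisely when every object of $\mathcal{T}$ admits an inverse system in $\mathcal{P}$ that is associated with it in the sense of Definition~\ref{defass}. Thus proving density of H-$G$-ANR in H-$G$-TOP amounts to exhibiting, for each object $X$ of H-$G$-TOP, an associated inverse system whose terms lie in H-$G$-ANR.

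First I would fix an arbitrary object $X$ of H-$G$-TOP, which is simply an arbitrary $G$-space. Theorem~\ref{th-main} supplies the inverse system $\{X_\lambda, [p_{\lambda\lambda'}], \Lambda\}$ together with the equivariant maps $p_\lambda : X \to X_\lambda$, and asserts that this system is associated with $X$. Concretely, the three conditions $1^\circ$, $2^\circ$, and $3^\circ$ of Definition~\ref{defass} are exactly the assertions verified in the proof of Theorem~\ref{th-main}: condition $1^\circ$ from the identity $p_\lambda = p_{\lambda\lambda'} p_{\lambda'}$, condition $2^\circ$ from the factorization $h p_\lambda = f$ through $X_\lambda = U$, and condition $3^\circ$ from the application of Theorem~\ref{th2} producing the neighborhood $V$. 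Since $X$ was arbitrary, every object of H-$G$-TOP has an associated inverse system in H-$G$-ANR, which is the required density.

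The one point that warrants a line of justification---and the only place I would expect any genuine work---is confirming that the terms $X_\lambda = U$ are indeed objects of the subcategory H-$G$-ANR, i.e., that an open invariant neighborhood $U$ of a metrizable $G$-space $X_\mu$ in the normed $G$-space $M(X_\mu)$ is itself a $G$-ANR-space. This fact is, however, already built into the formulation of Theorem~\ref{th-main}, where the inverse system is declared to live in H-$G$-ANR; granting it, the present theorem follows immediately with no further computation.
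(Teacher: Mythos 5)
Your proposal is correct and matches the paper's own argument: the paper likewise derives this theorem immediately from Theorem~\ref{th-main} by unfolding the Marde\v{s}i\'{c}--Segal definition of density as the existence of an associated inverse system in the subcategory for every object. Your added remark that one must know the terms $X_\lambda=U$ lie in H-$G$-ANR is handled the same way in the paper, namely by deferring to the formulation of Theorem~\ref{th-main}.
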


This theorem makes it possible to construct an equivariant shape theory 
for the class of all $G$-spaces by following the 
general scheme of construction of shape theory for any
category $\mathcal{T}$ and its dense subcategory $\mathcal{P}$ (see~\cite{ms82}).


\begin{thebibliography}{99}

\bibitem{am87}
 S.  A.  Antonian and S. Marde\v{s}i\'{c},
\emph{Equivariant shape,}
 Fund. Math. 127 (1987), pp.~213--224.

\bibitem{A-E}
 R.  Arens, R. Eells,
\emph{On embedding uniform and topological spaces,}
 Pac. J. Math. vol.6, No. 3 (1956).

\bibitem{b72}
 Borsuk K.,
\emph{Theory of shape,}
 Lecture notes series 28 (1970).

\bibitem{br72}
 G. E. Bredon,
\emph{Introduction to Compact Transformation Groups,}
 (New York, 1972).

\bibitem{gev85}
 P. S. Gevorgyan,
\emph{Linearization of completely regular $G$-spaces,}
 5 Tiraspol Symposium on General Topology and Its Applications, (1985), pp.61--62.

\bibitem{gev3}
 P. S. Gevorgyan,
\emph{An equivariant generalization of Arens-Ellis theorem,}
 Izvestya Natsionalnoi Akademii Nauk Armenii. Matematica, vol. 31, No. 5 (1996), pp.70--75.

\bibitem{smirn85}
 Yu.M. Smirnov,
\emph{General equivariant shape theory,}
 5 Tiraspol Symposium on General Topology and Its Applications, (1985), p.224 [in Russian].

\bibitem{smirn85-1}
Yu.M. Smirnov,
\emph{Shape theory for $G$-pairs,}
 Russian Math. Surveys, 40:2 (1985), 185--203.

\bibitem{ms82}
 S. Marde\v{s}i\'{c}, J. Segal,
\emph{Shape theory --- The inverse system approach,}
 North-Holland, Amsterdam, 1982.

\bibitem{morita}
 K. Morita,
\emph{On shapes of topological spaces,}
 Fund. Math., \textbf{86} (1975), 251-259.

\bibitem{p60}
 R.S. Palais,
\emph{The classification of $G$-spaces,}
 Mem. AMS, No.~36 (1960).


\end{thebibliography}

\end{document}